\date{1 (14) July  2025}
\author{Theodore~Voronov}
\address{Department of Mathematics, University of Manchester, Manchester, M13 9PL, UK}
\email{theodore.voronov@manchester.ac.uk}
\title[Graded and shifted notions]{On   graded and shifted notions, and thick morphisms}
\newtheorem{theorem}{Theorem}
\newtheorem{proposition}{Proposition}
\theoremstyle{definition}
\newtheorem{definition}{Definition}
\newtheorem{remark}{Remark}
\def\co{\colon\thinspace}
\DeclareMathOperator{\fun}{\mathit{C^{\infty}}}
\DeclareMathOperator{\funn}{\mathbf{C^{\infty}}}
\DeclareMathOperator{\pfunn}{\mathbf{\Pi\!C^{\infty}}}
 \DeclareMathOperator{\sign}{sgn}
\newcommand{\der}[2]{{\frac{\partial {#1}}{\partial {#2}}}}
\newcommand{\lder}[2]{{\partial {#1}/\partial {#2}}}
\newcommand{\var}[2]{{\frac{\delta {#1}}{\delta {#2}}}}
\newcommand{\Z}{{\mathbb Z_{2}}}
\newcommand{\ZZ}{{\mathbb Z}}
\newcommand{\p}{\partial}
\newcommand{\w}{{\mathbf{w}}}
\renewcommand{\a}{\alpha}
\renewcommand{\b}{\beta}
\newcommand{\e}{\varepsilon}
\newcommand{\s}{\sigma}
\newcommand{\f}{{\varphi}}
\newcommand{\F}{{\Phi}}
\newcommand{\la}{{\lambda}}
\newcommand{\x}{{\xi}}
\newcommand{\itt}{{\tilde \imath}}
\newcommand{\at}{{\tilde a}}
\newcommand{\bt}{{\tilde b}}
\newcommand{\ut}{{\tilde u}}
\newcommand{\xt}{{\tilde x}}
\newcommand{\yt}{{\tilde y}}
\newcommand{\Qt}{{\tilde Q}}
\newcommand{\lat}{{\tilde \lambda}}
\newcommand{\xit}{{\tilde \xi}}
\DeclareMathOperator{\Vect}{\mathrm{Vect}}
\newcommand{\lsch}{{[\![}}
\newcommand{\rsch}{{]\!]}}
\newcommand{\Sinf}{S_{\infty}}
\newcommand{\Pinf}{P_{\infty}}
\newcommand{\Linf}{L_{\infty}}
\newcommand{\tto}{{\linethickness{2pt}
		  \,\begin{picture}(1,0)
                   \put(0,0.26){\line(1,0){0.95}}
                   \put(0,0){$\boldsymbol{\rightarrow}$}
                  \end{picture}
                  }\,
}
\newcommand{\oto}{{\linethickness{0.5pt}
		  \,\begin{picture}(1,0)
		  \put(0.07,0.175){\line(0,1){0.2}}
                   \put(-0.01,0){$\boldsymbol{\Rightarrow}$}
                  \end{picture}
                  }\,
}
\newcommand{\void}{\varnothing}
\newcommand{\yp}{\boldsymbol{y}}
\newcommand{\xp}{\boldsymbol{x}}
\newcommand{\T}{\mathrm{T}}
\begin{document}
\begin{abstract}
We consider the notions of $L_{\infty}$-, $P_{\infty}$-,  and $S_{\infty}$-algebras (including ``shifted'' versions) in  the $\mathbb{Z}_2 \times \mathbb{Z}$-graded setting. We also consider thick (microformal) morphisms and show   how they work in such graded context.  

In particular, we show that a ``shifted $S_{\infty}$-thick morphism'' (which  we
introduce here) induces an $L_{\infty}$-morphism of shifted $S_{\infty}$-structures. The same holds for ``shifted $P_{\infty}$-thick morphisms'' and shifted $P_{\infty}$-structures, respectively.
\end{abstract}
\maketitle
\tableofcontents
 \section{Introduction.}
 
Standard approach to homotopy algebras  such as $\Linf$-, $\Pinf$-,  and $\Sinf$-algebras (homotopy generalizations of Lie, Poisson, and odd Poisson structures, respectively) uses $\ZZ$-grading, so that signs in the formulas are determined by the degrees. On the other hand, in supergeometry, ``parity'' and ``degree'' do not have to agree (for example, there are even $1$-forms and odd $2$-forms). From physical viewpoint, linking parities with degrees is also unnatural because excludes theories with fermions. 

The author has always maintained that parity\,---\,responsible for signs\,---\,should be kept separate from any  $\ZZ$-gradings\,---\,used for counting. In a concrete situation, there can be no $\ZZ$-grading at all, or several $\ZZ$-gradings (which   may  be combined  one way or another). 

One purpose of this note is to present  the notions of $\Linf$-, $\Pinf$-,  and $\Sinf$-algebras in the $\Z \times \ZZ$-graded (``graded superized'')  setting, as opposed to  the  conventional    $\ZZ$-graded      and   my   favorite  $\Z$-graded   (``purely super'') settings. There is no pretence for originality in the definitions, which I believe should be folklore. Still it is worth putting them down. 
Our consideration   includes     ``shifted'' notions (see on them~\cite{pridham:outline2018} and \cite{behrend-peddie-xu:2023}). There is an overlap with parts of~\cite{pridham:outline2018}, but   our    exposition is simpler because it is based on the more elementary differential-geometric language. 

Another goal is to show  how  thick morphisms of supermanifolds~\cite{tv:nonlinearpullback, tv:microformal}   work in the presence of a $\ZZ$-grading. In particular, we show that a ``shifted $\Sinf$-thick morphism'' (a notion we introduce here) induces an $\Linf$-morphism of shifted $\Sinf$-structures. (It holds similarly for the shifted $\Pinf$ case.)

\emph{Notation}: we denote \emph{parity} ($\Z$-grading) by the tilde over a symbol and may use different names and notations for a $\ZZ$-grading, such as $\w$ for ``weight'' and $\deg$ for ``degree''\,.

\section{Versions of $\Linf$-algebras in the purely super setting.}

\begin{definition}\label{def.linf}
  An \emph{$\Linf$-algebra} is a super vector space (or a module over a commutative superalgebra) $L$ with a sequence of multilinear operations
  \begin{equation}
    [-,\ldots,-]\co \underbrace{L\times \ldots \times L}_{\text{$n$ times}} \to L \quad \text{(for $n=0,1,2,\ldots $)}
  \end{equation}
   of   parities $n\mod 2$,  
   which are antisymmetric in the supersense and satisfy  higher Jacobi identities in the form
  \begin{equation}\label{eq.hjac}
    \sum_{r+s=n}\sum_{\text{shuffles}}  (-1)^{\b}  [[x_{\s(1)},\ldots,x_{\s(r)}], \ldots , x_{\s(r+s)}]=0
  \end{equation}
  for all~$n=0,1,2,3,...$, where $(-1)^{\b}=(-1)^{rs}\sign \s (-1)^{\a}$ and $(-1)^{\a}$  is the Koszul sign.
\end{definition}

Comments: this is a super setting version    (meaning $\ZZ$ is replaced by $\Z$) of the Lada-Stasheff~\cite{lada:stasheff} definition apart from including a $0$-bracket $[\void]\in L_0$. The latter is referred to as `background'~\cite{zwiebach:93csft} or `curvature' and structures with it are sometimes called `curved'. We systematically suppress this adjective and all algebras that we consider are `curved' by default.
`Linearity' in the case of modules means
\begin{equation}\label{eq.lin}
  [\la x_1,\ldots,x_n]= (-1)^{\lat n}\la [x_1,\ldots,x_n]\,, \quad
  [x_1 \la ,x_2,\ldots,x_n]=[x_1 , \la x_2,\ldots,x_n]\,,
\end{equation}
etc.

\begin{definition}\label{def.oddlinf}
  An  \emph{odd $\Linf$-algebra} is a super vector space (or a module over a commutative superalgebra) $V$ with a sequence of multilinear operations
  \begin{equation}
    [-,\ldots,-]\co \underbrace{V\times \ldots \times V}_{\text{$n$ times}} \to V \quad \text{(for $n=0,1,2,\ldots $)}\,,
  \end{equation}
   which are odd, symmetric in the supersense and satisfy  higher Jacobi identities  in the form
  \begin{equation}\label{eq.hjacod}
    \sum_{r+s=n}\sum_{\text{shuffles}}  (-1)^{\a}  [[u_{\s(1)},\ldots,u_{\s(r)}], \ldots , u_{\s(r+s)}]=0
  \end{equation}
  for all~$n=0,1,2,3,...$, where $(-1)^{\a}$  is the Koszul sign.
\end{definition}

These notions are equivalent via parity reversion. If $L$ is an $\Linf$-algebra in the sense of Definition~\ref{def.linf}, then $V=\Pi L$ is an odd $\Linf$-algebra in the sense of Definition~\ref{def.oddlinf}, and conversely, where the brackets in $L$ and $V=\Pi L$ are related by
\begin{equation}\label{eq.brack}
  \Pi [x_1,\ldots,x_n] =(-1)^{\xt_1(n-1)+\xt_2(n-2)+\ldots+\xt_{n-1}} [\Pi x_1,\ldots,\Pi x_n]\,,
\end{equation}
where $x_i\in L$. One can check that~\eqref{eq.brack} turns antisymmetry into symmetry on the parity-reversed space  and conversely, and that one version of the Jacobi identities is transformed into another.\footnote{In~\cite{tv:higherder, tv:higherderarb}, we called the notion specified by Definition~\ref{def.oddlinf}, just an ``$\Linf$-algebra''; but here we aim at clarifying the terminology.}

A ``combined'' version can be given by
\begin{definition}\label{def.shlinf}
Let $\e=0,1$.
  An  \emph{$\e$-shifted $\Linf$-algebra} is a super vector space (or a module over a commutative superalgebra) $V$ with a sequence of multilinear operations
  \begin{equation}
    [-,\ldots,-]\co \underbrace{V\times \ldots \times V}_{\text{$n$ times}} \to V \quad \text{(for $n=0,1,2,\ldots $)}\,,
  \end{equation}
of parities $\e(n+1)+n= n(\e + 1) + \e $ making $V$ an $\Linf$-algebra in the sense of Definition~\ref{def.linf} for $\e=0$ and an odd $\Linf$-algebra in the sense of Definition~\ref{def.oddlinf} for $\e=1$.
\end{definition}

The geometric description can be given as follows~\cite{tv:higherder}. 

Let $V$ be a vector superspace. It can be treated also as a supermanifold by considering coordinates of an even vector relative to some basis as independent variables of the corresponding parity. (So every basis $e_i$ in $V$ as a $\Z$-graded vector space  gives rise to variables $x^i$, where $\tilde{x}^i=\tilde{e_i}$, regarded as global coordinates on $V$ as a supermanifold, so that the expression $\xp=x^ie_i$ representing a `running even vector' in $V$ is invariant under changes of bases.) Similarly for the parity-reversed superspace $\Pi V$.

Define $i\co V\to \Vect(\Pi V)$ by $i(u)=i_u:=(-1)^{\ut}u^i\lder{}{\x^i}$ if $u=u^ie_i$. Here $\x^i$ are (left) coordinates on $\Pi V$ corresponding to a basis $e_i\in V$. This is an odd monomorphism, giving an odd isomorphism between vectors in $V$ and `constant' vector fields on $\Pi V$. (A similar map $V\to \Vect(V)$ can be seen as the  identical inclusion.)

\begin{proposition}\label{prop.homvf}
  An    $\e$-shifted $\Linf$-algebra  structure  on a superspace $V$ is specified by a formal  homological vector field $Q\in \Vect(\Pi^{1+\e}V)$, i.e.    $\Qt=1$  and $Q^2=0$.
  For $\e=1$, the brackets are given by
  \begin{equation}\label{eq.odbrQ}
    [u_1,\ldots,u_n]_Q = [\ldots[Q, u_1],\ldots, u_n](0)\,.
  \end{equation}
  For $\e=0$, the brackets are given by
  \begin{equation}\label{eq.altbrQ}
    i_{[u_1,\ldots,u_n]_Q}=(-1)^{\ut_1(n-1)+\ldots +\ut_{n-1}}[\ldots[Q,i_{u_1}],\ldots,i_{u_n}](0)\,.
  \end{equation}
\end{proposition}

We always mean by $Q$ a formal vector field, but will suppress the adjective.

\section{Versions of $\Linf$-algebras with a $\ZZ$-grading.}

It is usual to think about  signs as  coming from   ``degrees'', i.e.  a $\ZZ$-grading. We prefer to keep it separate. In the sequel we deal with $\Z\times \ZZ$-graded vector spaces and algebras. If necessary, we may write them as $V=(V^n)$, $n\in \ZZ$, and $V^n=V^n_0\oplus V^n_1$ (where $0,1\in \Z$). 

We refer to $\Z$-grading as \emph{parity} and $\ZZ$-grading  as \emph{weight}. 

Parity  is   responsible for signs and weight is a  counting tool.

\subsection{On grading and shifts.}
There are independent functors $\Pi$ (parity reversion) and $\T=[1]$ (shift). We define $(V[1])^n=V^{n+1}$ and $(V[s])^n=V^{n+s}$ for any $s\in \ZZ$. Let $\w(u)=n$ stand  for the weight of $u\in V^n$. Weight in $V[s]$ will be denoted $\w^{[s]}$. Then $\w^{[s]}(u)=n-s$ if $\w(u)=n$.  

A shifted space $V[n]$ can be regarded as another copy of $V$, with the notation $u[s]\in V[s]$ if $u\in V$. If this notation is used, there is no particular need for distinguishing $\w^{[s]}$. We can simply write $\w(u[s])$ and  $\w(u[s])=\w(u)-s$.  If $e_i$ is a basis in $V$, then $e_i[s]$ is a basis in $V[s]$. 

When we treat a vector space as a graded supermanifold\footnote{We   often omit the adjective ``graded''   unless we want to emphasize   a $\ZZ$-grading. Likewise, we can suppress the prefix ``super'' if it does not lead to a confusion.}, the general principle is:   
\begin{center}
\emph{``points'' are always even and of weight zero}. 
\end{center}
A choice of basis in $V$ gives rise to coordinates in $V$ as a supermanifold; if we stick to left coordinates, these are indeterminates $x^i$ such that the expression $x^ie_i$ is invariant under changes of basis, is even and of weight zero. Hence $\xt^i=\widetilde{e_i}=\itt$ and $\w(x^i)=-\w(e_i)$.

Likewise, for a shifted space $V[s]$ regarded as a   supermanifold, coordinates associated with a basis $e_i$ in $V$ will be $y^i$, $\w(y^i)=\w(x^i)+s$  (if $x^i$ are coordinates for $V$). So that   $\yp[s]=y^i e_i[s]$ is a `running  vector of weight zero' in $V[s]$ or  alternatively   $\yp=y^i e_i$ is a `running  vector of weight $s$' in $V$. 

From this viewpoint, treating an ordinary (ungraded)  vector space $V$ as a graded manifold with  coordinates of weight $+1$, which is a usual practice, means replacing it by the graded manifold $V[1]$, in the present notation. The same applies to vector bundles: writing $E[1]$ for an ordinary vector bundle $E$   means treating it as a graded manifold  with all fiber coordinates assigned weight $+1$; if $E$ is already a vector bundle in the category of graded manifolds, then $E[s]$ means a new vector bundle with the same transition functions and where all fiber coordinates have weights advanced by $s$.

\subsection{Two  graded versions of $\Linf$-algebras.}

Below ``graded'' means $\Z\times \ZZ$-graded. Commutators, commutativity, linearity, etc. are concerned only with the $\Z$-part of the grading.

\begin{definition}\label{def.grlinf}
  Let $V$ be a graded vector space (or   module over a  commutative superalgebra). An (even) \emph{$\Linf$-algebra} (resp., \emph{odd $\Linf$-algebra}) structure in $V$ is defined as in Definition~\ref{def.linf} (resp., in Definition~\ref{def.oddlinf}) with an additional requirement that the $n$th bracket
  \begin{equation}
    [-,\ldots,-]\co \underbrace{V\times \ldots \times V}_{\text{$n$ times}} \to V
  \end{equation}
  has weight $2-n$, for all $n$.
\end{definition}

For the even version, this will give  the Lada--Stasheff definition~\cite{lada:stasheff} if parity equals weight mod $2$  and there is no $0$-bracket.

\begin{proposition}\label{prop.homvfgrad}
  A structure of an \emph{$\Linf$-algebra} (resp.,  odd $\Linf$-algebra) in  a graded vector space $V$ in the sense of Definition~\ref{def.grlinf} is equivalent to a homological vector field
  \begin{equation}\label{eq.Q}
    Q\in \Vect(\Pi V[1]) \quad \text{(resp., $Q\in \Vect(V[1])$)}
  \end{equation}
of weight $+1$.  The brackets are given by the same formulas~\eqref{eq.altbrQ} or~\eqref{eq.odbrQ}. (In~\eqref{eq.odbrQ}, one has to write $i_u=u[1]$ instead of $u$.)
\end{proposition}

For a graded vector space, the  maps $i\co V\to \Vect(\Pi V[1])$ and $i\co V\to \Vect(V[1])$ defined by $i(u)=i_u:=(-1)^{\ut}u^i\lder{}{\x^i}$ and $i(u)=i_u:= u^i\lder{}{y^i}$, respectively, where $\w(\x^i)=-\w(e_i)+1$, $\w(y^i)=-\w(e_i)+1$, have both weight $-1$. (The map $i\co V\to \Vect(\Pi V[1])$ is odd, the map $i\co V\to \Vect(V[1])$ is even.)

 \subsection{Shifted $\Linf$-algebras.}

Now towards the ``shifted'' version. Consider a graded vector space (or a module) $V$. Consider $V[s]$ and $\Pi V[s]$. We will use the same notation, $u\mapsto i_u$, for the linear maps
\begin{equation*}
  i\co V\to \Vect(V[s])  \quad \text{and} \quad   i\co V\to \Vect(\Pi V[s])
\end{equation*}
defined, respectively, as $u\mapsto u[s]\in V[s]$ and $u\mapsto (-1)^{\ut}\Pi u[s]\in \Pi V[s]$ followed by the   inclusions $V[s]\hookrightarrow \Vect(V[s])$  and $\Pi V[s]\hookrightarrow \Vect(\Pi V[s])$ as constant vector fields. Both   $i\co V\to \Vect(V[s])$ and $i\co V\to \Vect(\Pi V[s])$ have weights $-s$, the first map is even, the second map is odd.

In coordinates, $i_u=u^i\der{}{y^i}$ for $i\co V\to \Vect(V[s])$ and  $i_u=(-1)^{\ut}u^i\der{}{\x^i}$ for $i\co V\to \Vect(\Pi V[s])$. Here $\w(y^i)=\w(\x^i)=w^i+s$, where $w^i=-\w(e_i)$ are the weights of coordinates in $V$, and $\yt^i=\itt$, $\xit^i=\itt+1$.

Fix $\e\in \Z$ and $k\in \ZZ$.  An ``$(\e,k)$-shifted'' $\Linf$-structure on $V$ is essentially an $\Linf$-structure on $V[-k]$, even for $\e=1$ and odd for $\e=0$. The brackets on $V$ are induced from the brackets on  $V[-k]$, by $u\mapsto u[-k]$. In more detail it is given by the following definition (where the only new ingredient is calculation of   weights).

\begin{definition}\label{def.shgrlinf}
An   \emph{$(\e,k)$-shifted $\Linf$-algebra}   structure in $V$ is a sequence of multilinear operations of weights $2-n+k(n-1)$ and parities  $\e(n+1)+n$
 \begin{equation}\label{eq.shbrack}
    [-,\ldots,-]\co \underbrace{V\times \ldots \times V}_{\text{$n$ times}} \to V\,,
  \end{equation}
where $n=0,1,2,\ldots\ $, which for $\e=0$ are antisymmetric and satisfy higher Jacobi identities in the form~\eqref{eq.hjac}, and for $\e=1$ are symmetric and satisfy higher Jacobi identities in the form~\eqref{eq.hjacod}.
\end{definition}

\begin{proposition}\label{prop.shandsh}
An  $(\e,k)$-shifted $\Linf$-algebra   structure in $V$ is the same as an $\e$-shifted $\Linf$-structure on $V[-k]$, if the brackets in   $V$ and $V[-k]$ are given by the same formulas. 
\end{proposition}
\begin{proof}
  Since a  shift  of $\ZZ$-grading does not affect symmetry properties or Jacobi identities (which remain the same), all what is needed, is to check the weights of the brackets. Indeed, if we consider weights $\w^{[-k]}$, i.e. in $V[-k]$,   for $[u_1,\ldots,u_n]$ given by~\eqref{eq.shbrack},  we will obtain 
\begin{multline*}
  \w^{[-k]}\left([u_1,\ldots,u_n]\right)=\w\left([u_1,\ldots,u_n]\right)+k=\w(u_1)+\ldots + \w(u_n) + 2-n+k(n-1)+k=\\
  \w^{[-k]}(u_1)+\ldots + \w^{[-k]}(u_n) +nk
  + 2-n+k(n-1)+k=\\
  \w^{[-k]}(u_1)+\ldots + \w^{[-k]}(u_n)+2-n\,,
\end{multline*}
i.e. weight    $2-n$ for the $n$-bracket     in $V[-k]$, as desired.
\end{proof}


Note again that linearity, symmetry and antisymmetry are controlled only by parity and not by weight.
For $k=0$,  we return to   $\Linf$-algebras or odd $\Linf$-algebras in the sense of Definition~\ref{def.grlinf}  if $\e=0$ or $\e=1$, respectively.

If one wants to link parity with weight, then in the definition of a  shifted structure $\e\in \Z$ has to be set to $k\mod 2$.

Again, there is a geometric description: 

\begin{proposition}\label{prop.homf}
  An    $(\e,k)$-shifted $\Linf$-algebra  structure  on  $V$ is equivalent to a   homological vector field of weight $+1$
  \begin{equation}\label{eq.Qshgrlinf}
    Q\in \Vect(\Pi^{1+\e}V[1-k])\,.
  \end{equation}
 For $\e=0$, the brackets are given by
  \begin{equation}\label{eq.shaltbrQ}
    i_{[u_1,\ldots,u_n]_Q}=(-1)^{\ut_1(n-1)+\ldots +\ut_{n-1}}[\ldots[Q,i_{u_1}],\ldots,i_{u_n}](0)\,.
  \end{equation}
  For $\e=1$, the brackets are given by
  \begin{equation}\label{eq.shodbrQ}
    i_{[u_1,\ldots,u_n]_Q} = [\ldots[Q, i_{u_1}],\ldots, i_{u_n}](0)\,.
  \end{equation}
 Here $u_1,\ldots,u_n\in V$.
\end{proposition}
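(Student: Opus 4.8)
The plan is to reduce the statement to the unshifted graded case — the Proposition stated after Definition~\ref{def.grlinf} (which is the present one with $k=0$) — and through it to the purely super Proposition stated after Definition~\ref{def.shlinf}, treating the weight grading as an independent decoration. The key observation is that all the \emph{algebraic} content of the claimed equivalence is already settled there: the translation of the homological condition $Q^2=0$ into the higher Jacobi identities \eqref{eq.hjac}/\eqref{eq.hjacod}, the (anti)symmetry of the brackets, and the validity of the reconstruction formulas \eqref{eq.shaltbrQ}/\eqref{eq.shodbrQ} all involve only the parity ($\Z$-grading), since commutators and Koszul signs never see the weight. Hence the only genuinely new point is the weight bookkeeping: that a (formal) vector field $Q\in\Vect(\Pi^{1+\e}V[1-k])$ is of uniform weight $+1$ precisely when, for every $n$, the brackets it induces on $V$ have weight $2-n+k(n-1)$ as prescribed by Definition~\ref{def.shgrlinf}.

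For the weight count I would use that, with $s=1-k$, the map $u\mapsto i_u$ into $\Vect(\Pi^{1+\e}V[1-k])$ has weight $-(1-k)=k-1$, so that $\w(i_u)=\w(u)+(k-1)$, while commutators of vector fields are weight-additive and evaluation at the origin preserves weight. Applying this to the iterated commutator in \eqref{eq.shaltbrQ}/\eqref{eq.shodbrQ} (the sign prefactor is weight-neutral) together with $\w(Q)=+1$ gives
\begin{equation*}
  \w\bigl(i_{[u_1,\ldots,u_n]_Q}\bigr)=\w(Q)+\sum_{j=1}^n\w(i_{u_j})=1+\sum_{j=1}^n\bigl(\w(u_j)+k-1\bigr),
\end{equation*}
and since the left-hand side also equals $\w([u_1,\ldots,u_n]_Q)+(k-1)$, solving for the weight added by the $n$-th bracket yields
\begin{equation*}
  \w\bigl([u_1,\ldots,u_n]_Q\bigr)-\sum_{j=1}^n\w(u_j)=1+(n-1)(k-1)=2-n+k(n-1),
\end{equation*}
exactly the weight of Definition~\ref{def.shgrlinf}. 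Repeating the computation on the coordinate expression $Q=Q^a(z)\,\p/\p z^a$, where $\w(z^i)=-\w(e_i)+(1-k)$, confirms that requiring every monomial of $Q$ to have weight $+1$ is equivalent to the same family of bracket weights.

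The converse direction reassembles $Q$ from a given $(\e,k)$-shifted structure: the $n$-th bracket supplies the degree-$n$ Taylor component of $Q$, the parities $\e(n+1)+n$ match $\Pi^{1+\e}$ so that $Q$ is odd, the weights $2-n+k(n-1)$ render each component weight $+1$ by the computation above, and $Q^2=0$ is equivalent to \eqref{eq.hjac}/\eqref{eq.hjacod} by the purely super argument. The main — indeed the only — obstacle is this weight bookkeeping: one must check that the shift by $1-k$ is the correct one, i.e.\ that the $n$-fold contribution $(n-1)(k-1)$ from the factors $i_{u_j}$ and the single compensating $-(k-1)$ from the target collapse the whole $n$-dependent family of bracket weights onto the single homogeneity condition $\w(Q)=+1$. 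Once the shift is pinned down as $1-k$, everything else is inherited verbatim from the unshifted graded and the purely super Propositions.
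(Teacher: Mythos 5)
Your proposal is correct and follows essentially the same route as the paper: the algebraic equivalence is inherited from the earlier (purely super and unshifted graded) propositions, and the only new content is the weight bookkeeping, which you carry out exactly as the paper does --- using that $i$ has weight $k-1$, that commutators are weight-additive, and that $\w(Q)=+1$ forces the $n$-th bracket to have weight $1+(n-1)(k-1)=2-n+k(n-1)$. The paper's proof is precisely this weight count, so nothing is missing.
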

\begin{proof}
In view of the geometric description for the ungraded case by Proposition~\ref{prop.homvf}, what remains is to check the weights. 
Suppose the $n$-bracket is given by~\eqref{eq.shaltbrQ} or~\eqref{eq.shodbrQ}. Then in the RHS, one has the weight $1+\w(u_1)-(1-k)+ \ldots+ \w(u_n)-(1-k)=\w(u_1) + \ldots+ \w(u_n) + n(k-1)+1$; in the LHS, the weight is $\w([u_1,\ldots,u_n]_Q)-(1-k)$. Hence
  \begin{multline*}
    \w([u_1,\ldots,u_n]_Q)=\w(u_1) + \ldots+ \w(u_n)+  (1-k)+n(k-1)+1=\\
     \w(u_1) + \ldots+ \w(u_n)+ 2-n +k(n-1)\,,
  \end{multline*}
 as claimed.
\end{proof}

(Otherwise, one can apply Proposition~\ref{prop.shandsh} to replace an $(\e,k)$-shifted $\Linf$   structure  on  $V$ by an  $\e$-shifted $\Linf$   structure  on  $V[-k]$ and then apply Proposition~\ref{prop.homvfgrad} to obtain a homological vector field of  weight $+1$ on $\Pi^{1+\e}V[-k][1]=\Pi^{1+\e}V[1-k]$.)

\section{Graded and shifted versions of $\Pinf$- and $\Sinf$-algebras.}
\label{sec.shpinfsinf}

\subsection{Shifted (anti)cotangent bundle.}
Suppose $M$ is a graded (super)manifold. Then $T^*M$ is also a graded (super)manifold w.r.t. induced grading: if $x^a$ are local coordinates on $M$ with weights $\w(x^a)=w^a\in\ZZ$, then the fiber coordinates $p_a$ have weights $\w(p_a)=-w^a$. (Changes of coordinates $x^a,p_a$ on $T^*M$ respect weights.) The same holds for the anticotangent bundle $\Pi T^*M$. Local coordinates are $x^a,x^*_a$ of weights $\w(x^a)=w^a$, $\w(x^*_a)=-w^a$. The canonical Poisson bracket  on $T^*M$ and Schouten bracket  on $\Pi T^*M$ are both of weight $0$.  (Indeed, one may recall the explicit formulas, where, in each term, the partial derivative of one argument with respect to a coordinate is accompanied by the partial derivative of the other argument with respect to the conjugate momentum or anti-momentum of the opposite weight.)

As vector bundles, $T^*M$ and   $\Pi T^*M$ can be endowed with another grading, which is just the degree in fiber coordinates $p_a$ or $x^*_a$. More generally, one can assign degree $s$ to all fiber coordinates, for an arbitrary fixed $s\in \ZZ$. We denote by
\begin{equation*}
  T^*M[s]\quad \text{and} \quad \Pi T^*M[s]
\end{equation*}
the corresponding bundles with (total) weight counted as $\w^{[s]}(x^a)=\w(x^a)=w^a$, $\w^{[s]}(p_a)=-w^a+s$, and $\w^{[s]}(x^*_a)=-w^a+s$. There remains the second grading,  by ``standard'' degree $\deg$, for which $\deg(x^a)=0$, $\deg(p_a)=1$, and $\deg(x^*_a)=1$.

The canonical Poisson   and Schouten brackets   on $T^*M[s]$ and  $\Pi T^*M[s]$ are both of weight $-s$ (and degree $-1$).

\subsection{Shifted $\Pinf$- and $\Sinf$-structures.}

Now everything becomes obvious. Fix $(\e,k)\in \Z\times \ZZ$ as above. Let $A$ be a graded commutative superalgebra, i.e. a $\Z\times \ZZ$-graded associative algebra where $ab=ba(-1)^{\at\bt}$.

An   ``$(\e,k)$-shifted homotopy Poisson    structure'' in $A$ is just an  $(\e,k)$-shifted $\Linf$-structure in the sense of Definition~\ref{def.shgrlinf} related with the associative multiplication by the Leibniz rule. It makes sense to elaborate that.

\begin{definition}\label{def.shpoiss}
An   \emph{$(\e,k)$-shifted homotopy Poisson}   structure in $A$ is a sequence of multilinear operations of weights $2-n+k(n-1)$ and parities $\e(n+1)+n$ (i.e. of parity  $n\mod 2$ if $\e=0$ and $1$ for all $n$ if $\e=1$)
 \begin{equation}
    \{-,\ldots,-\}\co \underbrace{A\times \ldots \times A}_{\text{$n$ times}} \to A\,,
  \end{equation}
where $n=0,1,2,\ldots\ $, which \underline{for $\e=0$} are antisymmetric and satisfy
\begin{equation}\label{eq.hjac2}
    \sum_{r+s=n}\sum_{\text{shuffles}}  (-1)^{\b} \{ \{[a_{\s(1)},\ldots,a_{\s(r)}], \ldots , a_{\s(r+s)}\}=0
  \end{equation}
  for all~$n=0,1,2,3,...$, where $(-1)^{\b}=(-1)^{rs}\sign \s (-1)^{\a}$ and $(-1)^{\a}$  is the Koszul sign, and also satisfy
\begin{equation}\label{eq.leibpois}
   \{a_1,\ldots,a_{n-1},bc\}=\{a_1,\ldots,a_{n-1},b\}c+(-1)^{(\at_1+\ldots+\at_{n-1}+n)\bt}b\{a_1,\ldots,a_{n-1},c\}\,,
\end{equation}  
for all $n=1,2,3,...$\,; 
and  \underline{for $\e=1$} are symmetric and  satisfy
\begin{equation}\label{eq.hjac3}
    \sum_{r+s=n}\sum_{\text{shuffles}}  (-1)^{\a} \{ \{[a_{\s(1)},\ldots,a_{\s(r)}], \ldots , a_{\s(r+s)}\}=0
  \end{equation}
  for all~$n=0,1,2,3,...$, where   $(-1)^{\a}$  is the Koszul sign, 
and also satisfy
\begin{equation}\label{eq.shleib}
  \{a_1,\ldots,a_{n-1},bc\}=\{a_1,\ldots,a_{n-1},b\}c+(-1)^{(\at_1+\ldots+\at_{n-1}+1)\bt}b\{a_1,\ldots,a_{n-1},c\}\,,
\end{equation}  
for all $n=1,2,3,...$\,.

If $\e=0$, an  $(\e,k)$-shifted homotopy Poisson  structure is  called a \emph{$k$-shifted $\Pinf$-structure}, and if $\e=1$, it is called  a \emph{$k$-shifted $\Sinf$-structure}.
\end{definition}

 For $k=0$, we have  the usual    $\Pinf$-  and $\Sinf$-structures in the $\Z\times \ZZ$-graded versions.

 If parity is linked with weight, then $k$ must be even for a shifted $\Pinf$ case and odd for a shifted $\Sinf$ case.

\begin{definition}
\label{def.shsinfm}
If $M$ is a graded supermanifold, a \emph{shifted}   $\Pinf$- or \emph{$\Sinf$-structure on $M$} is the corresponding structure in the algebra  $\fun(M)$ (or in the sheaf of local $\fun$ functions).
\end{definition}

\begin{proposition}\label{prop.shiftpoiss}
{\vphantom{}} For a graded supermanifold $M$,

\emph{(1) } A  $k$-shifted $\Pinf$-structure on $M$ is specified by a  master  anti-Hamiltonian 
  \begin{equation*}
    P\in \fun(\Pi T^*M[1-k])
  \end{equation*}
  of weight $2-k$;
 
\emph{ (2) } A $k$-shifted $\Sinf$-structure on $M$ is specified by a  master  Hamiltonian  
  \begin{equation*}
    H\in \fun(T^*M[1-k])
    \end{equation*}
  of weight $2-k$.
\end{proposition}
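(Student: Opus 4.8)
The plan is to exhibit the brackets as the higher derived brackets of a single master (anti)Hamiltonian with respect to the canonical bracket on the (anti)cotangent bundle, and then to check that the $\ZZ$-grading forces this master object to be homogeneous of weight $2-k$. I treat the two cases in parallel. For a $k$-shifted $\Pinf$-structure ($\e=0$) I use the odd Schouten bracket $[-,-]$ on $\Pi T^*M[1-k]$, and for a $k$-shifted $\Sinf$-structure ($\e=1$) the even canonical Poisson bracket $\{-,-\}$ on $T^*M[1-k]$; by the preceding subsection both canonical brackets have weight $k-1$ and degree $-1$.

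First I would record the correspondence between multiderivations and fiberwise-polynomial functions. The Leibniz rules \eqref{eq.leibpoi} and \eqref{eq.leibsch} say that each $n$-bracket is a derivation in its last---hence, by (anti)symmetry, in every---argument. An antisymmetric (resp.\ symmetric) $n$-multiderivation of $\fun(M)$ is the same datum as a function on $\Pi T^*M$ (resp.\ on $T^*M$) that is fiberwise homogeneous of degree $n$; writing it as $P_n=\tfrac1{n!}P^{a_1\dots a_n}(x)\,x^*_{a_1}\cdots x^*_{a_n}$ (resp.\ $H_n=\tfrac1{n!}H^{a_1\dots a_n}(x)\,p_{a_1}\cdots p_{a_n}$), the $n$-bracket is recovered as the iterated derived bracket $[\dots[P_n,a_1],\dots,a_n]\big|_M$ restricted to the zero section (resp.\ with $\{-,-\}$ and $H_n$). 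Because $n$ brackets with fiber-degree-zero functions followed by this restriction annihilate every homogeneous component of fiber-degree $\ne n$, a single $P=\sum_nP_n$ simultaneously encodes all the brackets.

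The crux is the weight count. On $\Pi T^*M[1-k]$ one has $\w(x^*_a)=-w^a+(1-k)$ with $w^a=\w(x^a)$, while by Definition~\ref{def.shpoiss} the coefficient $P^{a_1\dots a_n}=\{x^{a_1},\dots,x^{a_n}\}$ has weight $\sum_i w^{a_i}+\bigl(2-n+k(n-1)\bigr)$. Hence
\begin{equation*}
  \w(P_n)=\Bigl(\textstyle\sum_i w^{a_i}+2-n+k(n-1)\Bigr)+\textstyle\sum_i\bigl(-w^{a_i}+1-k\bigr)=2-k,
\end{equation*}
independently of $n$, so the pieces assemble into $P\in\fun(\Pi T^*M[1-k])$ of uniform weight $2-k$; the identical computation on $T^*M[1-k]$ (where $\w(p_a)=-w^a+1-k$ as well) gives $\w(H)=2-k$. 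The parities are fixed by the same bookkeeping: the odd Schouten bracket together with the parity $\tilde a+1$ of $x^*_a$ forces $P$ to be even, matching the parities $n$ of the $\e=0$ brackets, while the even Poisson bracket together with the parity $\tilde a$ of $p_a$ forces $H$ to be odd, matching the parity $1$ of the $\e=1$ brackets.

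It remains to translate the structure equations, and this is the step where I would be most careful with signs. The higher-derived-bracket mechanism identifies the family of higher Jacobi identities \eqref{eq.hjac2} (resp.\ \eqref{eq.hjac3}), sorted by fiber-degree, with the single classical master equation $[P,P]=0$ (resp.\ $\{H,H\}=0$); equivalently, the abstract homological field $Q$ of Proposition~\ref{prop.homf} is realized concretely, through the Leibniz rule, by the Hamiltonian field $[P,-]$ on $\Pi T^*M[1-k]$ (resp.\ $\{H,-\}$ on $T^*M[1-k]$), whose square vanishes precisely when $[P,P]=0$ (resp.\ $\{H,H\}=0$). Note that $[P,P]$ is automatically odd of weight $2(2-k)+(k-1)=3-k$ (resp.\ $\{H,H\}$ is even of the same weight), so the master equation is homogeneous in parity and weight and splits only along the degree grading. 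The Leibniz rules \eqref{eq.leibpoi}--\eqref{eq.leibsch} themselves are inherited from the Leibniz property of the canonical bracket, since the last-slot derived bracket is a single application of $[-,-]$ (resp.\ $\{-,-\}$) to a product of pulled-back functions; matching the precise signs there and in \eqref{eq.shaltbrQ}--\eqref{eq.shodbrQ} against the Koszul signs of \eqref{eq.hjac2}--\eqref{eq.hjac3} is the one genuinely delicate piece of bookkeeping. The converse---that every $k$-shifted homotopy Poisson structure arises from such a master Hamiltonian---is immediate, since the Leibniz rule makes each bracket a multiderivation and multiderivations are exhausted by the fiberwise polynomials above.
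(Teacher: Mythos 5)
Your proposal is correct and takes essentially the same approach as the paper: both arguments sit on top of the familiar higher-derived-brackets correspondence (brackets $\{f_1,\ldots,f_n\}_H=(\ldots(H,f_1),\ldots,f_n)|_M$, Jacobi identities $\leftrightarrow$ master equation) and reduce the graded statement to weight bookkeeping. The only difference is direction and packaging: the paper starts from $H$ of weight $2-k$ and checks that the derived brackets have weights $2-n+k(n-1)$, using that the canonical bracket on $T^*M[1-k]$ has weight $k-1$, whereas you run the equivalent count on the fiberwise-homogeneous components $P_n$ (each of weight $2-k$) and spell out explicitly the multiderivation/fiberwise-polynomial dictionary and the parity check that the paper leaves as ``familiar.''
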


A ``{master  (anti)Hamiltonian}'' means an odd (resp. even) function on the (anti)cotangent bundle satisfying the master equation $(H,H)=0$ or $\lsch P,P\rsch=0$. It is always regarded as a formal function in the fiber directions. 

\begin{proof}[Proof of Proposition~\ref{prop.shiftpoiss}]
Regardless of shifts, the 
brackets of functions on $M$ for a  $\Pinf$- or $\Sinf$-structure  (``homotopy Poisson'' or ``homotopy Schouten'', resp.) are given by the   higher derived brackets formulas of~\cite{tv:higherder}. 
Consider, for example, the   $\Sinf$ case. We have for  the brackets on $M$ generated by a master Hamiltonian   $H\in \fun(T^*M[1-k])$\,,
\begin{equation}\label{shSinfH}
  \{f_1,\ldots,f_n\}_H=(\ldots(H,f_1),\ldots,f_n)_{|M}\,.
\end{equation}
It only remains to calculate the weights. 
At the RHS, the weights   are calculated on $T^*M[1-k]$. The pullback from $M$ to $T^*M[1-k]$ and the restriction from $T^*M[1-k]$ to $M$ preserve weights. The canonical Poisson bracket on $T^*M[1-k]$ has weight $k-1$. Hence we have the weight of the RHS of~\eqref{shSinfH} as
\begin{equation*}
  \w(f_1)+\ldots + \w(f_n)+(k-1)n+ 2-k =\w(f_1)+\ldots + \w(f_n)+2 -n+k (n-1)
\end{equation*}
as desired.
\end{proof}

The same can be  seen  also  by considering     homological vector fields  corresponding to these  brackets. For example, for an   $\Sinf$-structure on $M$ given by $H\in \fun(T^*M[1-k])$, the corresponding homological vector field   is
\begin{equation}\label{eq.HJ}
  Q_H=\int_M \!Dx\; H\Bigl(x,\der{f}{x}\Bigr)\var{}{f(x)}\,.
\end{equation}
According to Proposition~\ref{prop.homf},  
for the brackets to form a  $k$-shifted $\Linf$-structure, $Q_H$ should be a vector field on $\fun(M)[1-k]$ of weight $+1$,

Note that $f$ as a `point' of $\fun(M)[1-k]$ is an even function on $M$ of weight $1-k$ (which gives  weight $0$ in $\fun(M)[1-k]$). Count the weights in~\eqref{eq.HJ}. The variational derivative w.r.t. $f(x)$ of weight $1-k$ has weight $-1+k$. Therefore, the condition  $\w(Q_H)=+1$ is equivalent to $\w\left(H\Bigl(x,\der{f}{x}\Bigr)\right)=2-k$. But note that because $\w(f)=1-k$, the weight of $H\Bigl(x,\der{f}{x}\Bigr)$  on $M$ is exactly $\w^{[1-k]}(H)$, i.e.  the weight of $H$ regarded as a function on   $T^*M[1-k]$. Hence  $\w(Q_H)=+1$ is equivalent to $\w^{[1-k]}(H)=2-k$, as claimed.

\begin{remark}
   $\Pinf$- and $\Sinf$-structures  (in the purely super setting)   were introduced in~\cite{tv:higherder, tv:higherderarb}  under the names `homotopy Poisson algebra' and `homotopy Schouten algebra'.     $\Pinf$-algebras with this name were first defined in~\cite{cattaneo-felder:relative2007}  (in a $\ZZ$-graded setting).  Independently, I used the terminology with $\Pinf$- and also $\Sinf$ in the super setting in~\cite{tv:nonlinearpullback}  following on the notations      $P$ and  $S$ of~\cite{tv:graded} for  ``even Poisson'' and ``odd Poisson''.    Homological vector fields for $\Pinf$- and $\Sinf$-structures were found in~\cite{tv:nonlinearpullback}. 
\end{remark}

 \section{Graded version of microformal morphisms}
 
 Thick or microformal morphisms between graded manifolds can be easily adapted to the graded context so for the corresponding non-linear pullbacks to be compatible with grading of functions. We have mentioned that in~\cite{tv:microformal}, but below we make it explicit.

 \subsection{Construction}
 
Recall that a \emph{thick} or \emph{microformal morphism} $\F\co M_1\tto M_2$ (where $M_1$ and $M_2$ are supermanifolds) is a ``framed'' formal canonical relation $T^*M_1\dashrightarrow T^*M_2$ in the sense that it is specified in local coordinates by a generating function $S=S(x,q)$ regarded as part of the structure, where $x^a$ are coordinates on $M_1$ and $q^i$ are momentum variables on $T^*M_2$  conjugate to local coordinates $y^i$ on $M_2$, which is a formal power series
\begin{equation}\label{eq.s}
  S(x,q)=S^0(x) + \f^i(x)q_i + \frac{1}{2}S^{ij}(x)q_jq_i+\ldots \,
\end{equation}
(even,  in the sense of parity). A generating function~\eqref{eq.s} specifies the conjugate momenta $p_a$ and the coordinates $y^i$ as functions of $x^a,q_i$ via
\begin{equation}\label{eq.relat}
  p_a=\der{S}{x^a}  \quad \text{and} \quad y^i=(-1)^{\itt}\der{S}{q_i}\,.
\end{equation}
Here  $S^0=S^0(x)$ is a well-defined function on $M_1$, the first-order term of~\eqref{eq.s} defines an ordinary map $\f\co M_1\to M_2$, called the \emph{support} of a thick morphism $\F$,  by $y^i=\f^i(x)$, and the whole power series~\eqref{eq.s} undergoes a particular transformation under changes of local coordinates, so to make everything coordinate-independent. See details in~\cite{tv:nonlinearpullback, tv:microformal}.  

The key motivation for this notion is the possibility to define a pullback $\Phi^*$ as a non-linear formal map of the infinite-dimensional supermanifolds of functions:
\begin{equation}\label{eq.phistar}
  \Phi^*\co \funn(M_2)\to \funn(M_1)\,.
\end{equation}
(The `points' of $\funn(M)$ are even functions on $M$.) Here is the formula:
\begin{equation}\label{eq.pullb}
  \Phi^*\co g(y)\mapsto f(x)=g(y)+S(x,q)-y^iq_i\,, 
\end{equation}
where $y^i$ and $q_i$ are determined from the equations
\begin{equation}\label{eq.pullbaux}
   y^i=(-1)^{\itt}\der{S}{q_i}(x,q)   \quad \text{and} \quad  q_i=\der{g}{y^i}(y)\,. 
\end{equation}
This uniquely defines $f(x)$ as a formal power series in the derivatives of $g$ of increasing order evaluated at $y=\f(x)$\,: 
\begin{equation}\label{eq.pullexpl}
  f(x)= S^0(x) + g(\f(x)) + \frac{1}{2}S^{ij}(x)\, \p_jg(\f(x))\,\p_ig(\f(x)) + \ldots 
\end{equation}
(See also~\cite{tv:operovermap} and \cite{swerdlow:graph2025}.)

(There is a parallel notion of ``odd'' thick morphisms $M_1\oto M_2$ based on formal canonical relations $\Pi T^*M_1 \dashrightarrow\Pi T^*M_1$ specified by odd generating functions $S(x,y^*)$, which gives a formal map $\pfunn(M_2)\to \pfunn(M_1)$, and also a ``quantum'' version that gives a ``quantum pullback'' as a particular formal Fourier integral operator.)

Suppose now $M_1$ and $M_2$ are graded supermanifolds. Therefore the spaces of functions are graded vector spaces that we can also consider as graded supermanifolds. Taking into account possible shifts and parity reversion, we can consider the graded supermanifolds of the form $\funn(M)[s]$ and $\pfunn(M)[s]$, $s\in \ZZ$. `Points' of $\funn(M)[s]$ and $\pfunn(M)[s]$ are even (resp., odd) functions of weight $s$ on $M$. Here  $M=M_1,M_2$.

Consider, as above, shifted (anti)cotangent bundles.  Thus  for   $M_1$  we have
\begin{equation*}
  T^*M_1[s]\quad \text{and} \quad \Pi T^*M_1[s]\,,
\end{equation*}
meaning by definition that the weights are counted as  $\w^{[s]}(p_a)=-w^a+s$, and $\w^{[s]}(x^*_a)=-w^a+s$, where $\w^{[s]}(x^a)=\w(x^a)=w^a$.  Similarly for $M_2$, where $\w(y^i)=w^i$, etc. 

Let the generating function~\eqref{eq.s} of a thick morphism $\Phi\co M_1\tto M_2$ have weight $s$ with respect to  weights shifted this way. Then
\begin{equation}\label{eq.wrelat}
  \w^{[s]}\left(\der{S}{x^a}\right)= s-\w(x^a)=s-w^a \equiv \w^{[s]}(p_a)   
\end{equation}
and
\begin{equation}\label{eq.wrelat}
  \w^{[s]}\left(\der{S}{q_i}\right)= s -  \w^{[s]}(q_i)=       s-(-w^i+s)=w^i\equiv \w(y^i)\,.
\end{equation}
Therefore, equations~\eqref{eq.relat} agree with weights, and the generating function $S$ specifies a well-defined  formal  canonical relation $T^*M_1[s]\dashrightarrow T^*M_2[s]$ of   shifted cotangent bundles. 

Also,  if $g(y)$ has weight $s$, then the second equation in~\eqref{eq.pullbaux} will agree with $\w^{[s]}(q_i)=-w_i+s$, hence  in equation~\eqref{eq.pullb} the term $y^iq_i$ will have weight $s$, and therefore the whole right-hand side of~\eqref{eq.pullb} will be homogeneous of weight $s$. Hence the function $f(x)$ defined by these equations will be of weight $s$ as well. 

\begin{definition}\label{def.shthick}
We   call a thick morphism $M_1\tto M_2$ between graded supermanifolds with a generating function $S$ of weight $s$ (with respect to shifted weights), an \emph{$s$-shifted thick morphism}. 
\end{definition}

Hence we obtain the following statement.

\begin{theorem}
  For an $s$-shifted thick morphism $\Phi\co M_1\tto M_2$, its   pullback $\Phi^*$ by the usual formulas~\eqref{eq.pullb} and~\eqref{eq.pullbaux}   is a well-defined formal map of graded supermanifolds
  \begin{equation}\label{eq.shphistar}
  \Phi^*\co \funn(M_2)[s]\to \funn(M_1)[s]\,.
\end{equation}
\end{theorem}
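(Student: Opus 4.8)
The plan is to reduce the statement to weight bookkeeping. The fact that $\Phi^*$ is a well-defined formal map of the ungraded supermanifolds $\funn(M_2)\to\funn(M_1)$, and that it is even (so even functions go to even functions), is already established in \cite{tv:nonlinearpullback, tv:microformal}; the shift and the extra $\ZZ$-grading do not affect parity. Thus the only new content is that, when the generating function $S$ carries weight $s$ with respect to the shifted weights, the pullback sends an even function $g$ on $M_2$ of weight $s$ to an even function $f$ on $M_1$ of weight $s$ — i.e.\ it is a map of the graded manifolds $\funn(M_2)[s]\to\funn(M_1)[s]$.

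First I would record the weight assignments forced by the shift: $\w^{[s]}(x^a)=w^a$, $\w^{[s]}(p_a)=-w^a+s$ on $T^*M_1[s]$, and $\w(y^i)=w^i$, $\w^{[s]}(q_i)=-w^i+s$ on $T^*M_2[s]$. The computations leading to \eqref{eq.wrelat} already show that under these assignments the defining relations $p_a=\der{S}{x^a}$ and $y^i=(-1)^{\itt}\der{S}{q_i}$ are weight-homogeneous, so $S$ indeed specifies a canonical relation $T^*M_1[s]\dashrightarrow T^*M_2[s]$. The additional remark needed is that, for $g$ of weight $s$, the second auxiliary equation $q_i=\der{g}{y^i}(y)$ in \eqref{eq.pullbaux} is consistent with $\w^{[s]}(q_i)=-w^i+s$, since $\der{}{y^i}$ lowers weight by $w^i$.

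Next I would solve the auxiliary system \eqref{eq.pullbaux} for $y^i$ and $q_i$ as formal power series in $x$. The point to verify is that this formal inversion respects weight: once $S$ and $g$ both carry weight $s$, both equations are weight-homogeneous, so the standard recursion fixing the coefficients proceeds order by order within a fixed weight, yielding functions $y^i(x)$ and $q_i(x)$ on $M_1$ of weights $w^i$ and $-w^i+s$ respectively. I expect this weight-homogeneity of the formal solution to be the only genuine (if mild) obstacle; it is handled exactly as in the ungraded existence argument, now carrying along the extra $\ZZ$-grading.

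Finally, with $y^i(x),q_i(x)$ of the stated weights in hand, a direct count in \eqref{eq.pullb} gives the conclusion: $g(y)$ has weight $s$ (as $g$ has weight $s$ and $\w(y^i(x))=w^i=\w(y^i)$), $S(x,q)$ has weight $s$ by hypothesis, and the pairing $y^iq_i$ has weight $w^i+(-w^i+s)=s$; hence $f(x)$ is homogeneous of weight $s$. As a cross-check I would confirm the same term by term from the explicit expansion \eqref{eq.pullexpl}: $S^0$ has weight $s$, the composite $g(\f(x))$ has weight $s$ because the support map $\f$ is weight-preserving ($\w(\f^i)=w^i$), and in the quadratic term $\tfrac12 S^{ij}\,\p_jg\,\p_ig$ the weights combine as $(w^i+w^j-s)+(s-w^j)+(s-w^i)=s$, with the analogous cancellation at every higher order.
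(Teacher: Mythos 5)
Your proposal is correct and follows essentially the same route as the paper: the paper's own argument (given in the paragraphs preceding the theorem) consists precisely of the weight checks in~\eqref{eq.wrelat} for the relations~\eqref{eq.relat}, the observation that $q_i=\der{g}{y^i}$ is consistent with $\w^{[s]}(q_i)=-w^i+s$ when $\w(g)=s$, and the conclusion that every term in~\eqref{eq.pullb} is then homogeneous of weight $s$. Your additional remarks — that the order-by-order formal inversion of~\eqref{eq.pullbaux} preserves weight, and the term-by-term cross-check in~\eqref{eq.pullexpl} — are fine supplementary details that the paper leaves implicit.
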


 There is a similar notion of  \emph{$s$-shifted odd thick morphisms} $M_1\oto M_2$ for graded supermanifolds, based on shifted anticotangent bundles, $\Pi T^*M_1[s]$ and  $\Pi T^*M_2[s]$, and a similar theorem for the corresponding pullbacks, which will give $\pfunn(M_2)[s]\to \pfunn(M_1)[s]$\,.
 
 \begin{remark}
   As A.~Swerdlow  informed me, he has independently arrived at the same notion of shifted thick morphisms as in Definition~\ref{def.shthick}. 
 \end{remark}
 
 \subsection{Application to shifted Poisson brackets}
 
Suppose $M_1$ and $M_2$ are equipped with $k$-shifted $\Sinf$-structures in the sense of Definitions~\ref{def.shpoiss} and \ref{def.shsinfm}.
They are specified by master Hamiltonians $H_1 \in \fun(T^*M_1)[1-k]$, $H_2\in \fun(T^*M_2)[1-k]$ of weight  $2-k$, which give the corresponding Hamilton-Jacobi homological vector fields $Q_{H_1}$ and $Q_{H_2}$ of weight $+1$, as we discussed in Sec.~\ref{sec.shpinfsinf}.
 
 Let $S=S(x,q)$ be the generating function of an $s$-shifted thick morphism $\Phi\co M_1\tto M_2$. It should have weight $s$ with respect to $s$-shifted cotangent bundles. Let $s=1-k$, and consider the Hamilton-Jacobi equation
 \begin{equation}\label{eq.HJ}
   H_1\Bigl(x,\der{S}{x}\Bigr)=H_2\Bigl((-1)^{\itt}\der{S}{q_i},q\Bigr)\,.
 \end{equation}
 Observe that $\w^{[s]}(\der{S}{x^a})=\w^{[s]}(S)-w^a=s-w^a=1-k-w^a$, which coincides with $\w^{[1-k]}(p_a)=-w^a+1-k$. Likewise, 
 $\w^{[s]}(\der{S}{q_i})=\w^{[s]}(S)-\w^{[s]}(q_i)=s-(-w^i+s)=w^i$, which agrees with $\w^{[s]}(y^i)=\w(y^i)=w^i$. Hence equation~\eqref{eq.HJ} is compatible with our shifted grading in the cotangent bundles. 
 
 \begin{definition}
   An $s$-shifted thick morphism $\Phi\co M_1\tto M_2$ is a \emph{shifted $\Sinf$-thick morphism} for $k$-shifted $\Sinf$-structures on $M_1$ and $M_2$, where $s=1-k$, if its generating function $S(x,q)$ satisfies~\eqref{eq.HJ}. 
 \end{definition}
 
We immediately arrive at the following theorem.
\begin{theorem}
  The pullback $\Phi^*$ by shifted $\Sinf$-thick morphism $\Phi\co M_1\tto M_2$ for $k$-shifted $\Sinf$-structures on graded supermanifolds  $M_1$ and $M_2$ is a well-defined formal $Q$-morphism 
  \begin{equation}\label{eq.shmor}
  \Phi^*\co \funn(M_2)[1-k]\to \funn(M_1)[1-k]\,.
\end{equation}
  of graded infinite-dimensional supermanifolds, and thus specifies an $\Linf$-morphism of the shifted  bracket structures.
\end{theorem}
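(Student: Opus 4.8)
The plan is to split the statement into two assertions and prove them in turn. First, that $\Phi^*$ is a genuine $Q$-morphism, i.e.\ that it intertwines the Hamilton--Jacobi homological vector fields $Q_{H_2}$ and $Q_{H_1}$; and second, that a $Q$-morphism between the formal $Q$-manifolds $\funn(M_i)[1-k]$ carrying the shifted $\Sinf$-structures is exactly an $\Linf$-morphism of those structures. The well-definedness of $\Phi^*$ as a map \eqref{eq.shmor} is already guaranteed by the preceding $s$-shifted pullback theorem applied with $s=1-k$, so only the compatibility with the vector fields is new, and this is precisely where the Hamilton--Jacobi equation \eqref{eq.HJ} enters.

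For the core step I would argue as in the ungraded microformal setting of \cite{tv:nonlinearpullback,tv:microformal}. Writing $f=\Phi^*(g)$ as in \eqref{eq.pullb}--\eqref{eq.pullbaux}, the envelope (stationarity) property built into the generating-function relations gives $\der{f}{x^a}=\der{S}{x^a}$ once evaluated at the solution of \eqref{eq.pullbaux}: the implicit dependence on $y^i,q_i$ cancels precisely because $q_i=\der{g}{y^i}$ and $(-1)^{\itt}\der{S}{q_i}=y^i$. Consequently
\begin{equation*}
  H_1\Bigl(x,\der{f}{x}\Bigr)=H_1\Bigl(x,\der{S}{x}\Bigr)=H_2\Bigl((-1)^{\itt}\der{S}{q},q\Bigr)=H_2\Bigl(y,\der{g}{y}\Bigr)\Big|_{y=y(x)}\,,
\end{equation*}
where the middle equality is the Hamilton--Jacobi equation \eqref{eq.HJ} and the outer ones use the pullback relations \eqref{eq.pullbaux}. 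Recalling that $Q_{H_1}$ and $Q_{H_2}$ are the Hamilton--Jacobi vector fields attached to $H_1$ and $H_2$, this identity says exactly that the drift of $f$ under $Q_{H_1}$ equals the image, under the differential of $\Phi^*$, of the drift of $g$ under $Q_{H_2}$; i.e.\ $Q_{H_2}$ and $Q_{H_1}$ are $\Phi^*$-related, which is the $Q$-morphism property.

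It then remains to track weights to confirm that the whole construction lives in the $\Z\times\ZZ$-graded category. By Proposition~\ref{prop.shiftpoiss} the master Hamiltonians $H_1,H_2$ have weight $2-k$ and the fields $Q_{H_1},Q_{H_2}$ have weight $+1$; by the weight computation preceding \eqref{eq.HJ} the equation is homogeneous once $s=1-k$; and by the $s$-shifted pullback theorem $\Phi^*$ preserves weight as a map $\funn(M_2)[1-k]\to\funn(M_1)[1-k]$. Hence the $Q$-relation above holds within the weight grading, so $\Phi^*$ is a morphism of the formal weight-$(+1)$ $Q$-manifolds of Proposition~\ref{prop.homf}. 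Since such a morphism of $Q$-manifolds unpacks, via the derived-bracket dictionary of that proposition, into an $\Linf$-morphism of the associated $(\e,k)$-shifted structures (here $\e=1$), the induced map is an $\Linf$-morphism of the $k$-shifted $\Sinf$-brackets, as claimed.

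I expect the genuine obstacle to be controlling the nonlinearity of $\Phi^*$ in the core step: one must verify that the differential of the pullback acts as the ordinary (linear) pullback along the base-point-dependent map $x\mapsto y(x)$, so that the scalar identity for $H_1$ versus $H_2$ above genuinely lifts to an equality of tangent vectors $\var{}{f}$ rather than merely of their values. In the present graded setting this is handled verbatim as in \cite{tv:microformal}, the only addition being the weight bookkeeping just described; this is why the theorem follows at once from the earlier results together with the choice $s=1-k$.
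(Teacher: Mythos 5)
Your proposal is correct and takes essentially the same route as the paper: the paper states the theorem as an immediate consequence of the $s$-shifted pullback theorem (applied with $s=1-k$), the weight-compatibility check of the Hamilton--Jacobi equation \eqref{eq.HJ}, and the ungraded $Q$-morphism result of \cite{tv:nonlinearpullback,tv:microformal}, which is exactly the decomposition you use. Your expansion of the stationarity (envelope) argument and of the identification of the differential of $\Phi^*$ with the ordinary pullback along $x\mapsto y(x)$ faithfully reproduces the core of the cited ungraded proof that the paper invokes without repeating it.
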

 
 A similar statement holds for   shifted odd thick morphisms and shifted $\Pinf$-structures. 
 

\begin{thebibliography}{1}

\bibitem{behrend-peddie-xu:2023}
Kai Behrend, Matt Peddie, and Ping Xu.
\newblock Quantization of ({$-1$})-shifted derived {P}oisson manifolds.
\newblock {\em Comm. Math. Phys.}, 402(3):2301--2338, 2023.

\bibitem{cattaneo-felder:relative2007}
Alberto~S. Cattaneo and Giovanni Felder.
\newblock Relative formality theorem and quantisation of coisotropic
  submanifolds.
\newblock {\em Adv. Math.}, 208(2):521--548, 2007.

\bibitem{lada:stasheff}
Tom Lada and Jim Stasheff.
\newblock Introduction to {SH} {L}ie algebras for physicists.
\newblock {\em Internat. J. Theoret. Phys.}, 32(7):1087--1103, 1993.

\bibitem{pridham:outline2018}
J.~P. Pridham.
\newblock An outline of shifted {P}oisson structures and deformation
  quantisation in derived differential geometry.
\newblock 2018.
\newblock \texttt{arXiv:1804.07622}.

\bibitem{tv:operovermap}
Ekaterina Shemyakova and Theodore Voronov.
\newblock On differential operators over a map, thick morphisms of
  supermanifolds, and symplectic micromorphisms.
\newblock {\em Differential Geom. Appl.}, 74:Paper No. 101704, 12, 2021.

 

\bibitem{swerdlow:graph2025}
Andreas Swerdlow.
\newblock A graphical calculus for classical and quantum microformal morphisms.
\newblock \texttt{arXiv:2312.05053 [math.DG]}.
\newblock To appear in  {\em Comm. Math. Phys.}, 2025.

\bibitem{tv:graded}
Th. Voronov.
\newblock Graded manifolds and {Drinfeld} doubles for {Lie} bialgebroids.
\newblock In Theodore Voronov, editor, {\em Quantization, Poisson Brackets and
  Beyond}, volume 315 of {\em Contemp. Math.}, pages 131--168. Amer. Math.
  Soc., Providence, RI, 2002.

\bibitem{tv:higherder}
Th. Voronov.
\newblock Higher derived brackets and homotopy algebras.
\newblock {\em J. of Pure and Appl. Algebra}, 202(1--3):133--153, 2005.

\bibitem{tv:higherderarb}
Theodore~Th. Voronov.
\newblock Higher derived brackets for arbitrary derivations.
\newblock In {\em Travaux math\'ematiques. {F}asc. {XVI}}, Trav. Math., XVI,
  pages 163--186. Univ. Luxemb., Luxembourg, 2005.

\bibitem{tv:nonlinearpullback}
Th.~Th. Voronov.
\newblock ``{N}onlinear pullbacks" of functions and {$L_{\infty}$-morphisms}
  for homotopy {P}oisson structures.
\newblock {\em J. Geom. Phys.}, 111:94--110, 2017.
\newblock \texttt{arXiv:1409.6475 [math.DG]}.

\bibitem{tv:microformal}
Th.~Th. Voronov.
\newblock Microformal geometry and homotopy algebras.
\newblock {\em Proc. Steklov Inst. Math.}, 302:88--129, 2018.
\newblock \texttt{https://doi.org/10.1134/S0081543818060056}.

\bibitem{zwiebach:93csft}
Barton Zwiebach.
\newblock Closed string field theory: quantum action and the
  {B}atalin-{V}ilkovisky master equation.
\newblock {\em Nuclear Phys. B}, 390(1):33--152, 1993.

\end{thebibliography}


\def\cprime{$'$} \def\cprime{$'$} \def\cprime{$'$} \def\cprime{$'$}
  \def\cprime{$'$} \def\cprime{$'$} \def\cprime{$'$} \def\cprime{$'$}
  \def\cprime{$'$} \def\cprime{$'$} \def\cprime{$'$} \def\cprime{$'$}
  \def\cprime{$'$}

\end{document}